\theoremstyle{plain} 
\newtheorem{theorem}{Theorem}%[section]
\newtheorem{corollary}[theorem]{Corollary}
\theoremstyle{definition} 
\theoremstyle{definition} 
\newtheorem*{ex*}{Example}
\theoremstyle{remark} 
\theoremstyle{remark} 
\newtheorem*{remark*}{Remark}
\newcommand{\al}{\alpha}
\newcommand{\la}{\lambda}
\newcommand{\be}{\beta}
\renewcommand{\th}{\theta}
\renewcommand{\P}{\operatorname{\mathsf{P}}} 
\newcommand{\E}{\operatorname{\mathsf{E}}}
\newcommand{\R}{\mathbb{R}}
\newcommand{\C}{\mathcal{C}}
\newcommand{\F}{\mathcal{F}}
\newcommand{\tPi}{{\tilde{\Pi}}}
\renewcommand{\le}{\leqslant}
\renewcommand{\ge}{\geqslant}
\begin{document}

%\jobname.tex
%\today

\begin{frontmatter}

\title{Exact Rosenthal-type inequalities for \emph{p}\,=\,3, %$p=3$, 
and related results}
\runtitle{Exact Rosenthal-type inequalities}
%\date{\today}

% \author{\fnms{First}  \snm{Author}\corref{}\thanksref{t2}\ead[label=e1]{first@somewhere.com}},
%  \author{\fnms{Second} \snm{Author}\ead[label=e2]{second@somewhere.com}}
%  \and
%  \author{\fnms{Third}  \snm{Author}%
%  \ead[label=e3]{third@somewhere.com}%
%  \ead[label=u1,url]{http://www.foo.com}}
%
%  \thankstext{t2}{Footnote to the first author with the `thankstext' command.}

\begin{aug}
\author{\fnms{Iosif} \snm{Pinelis}\thanksref{t2}\ead[label=e1]{ipinelis@mtu.edu}
}
  \thankstext{t2}{Supported by NSA grant H98230-12-1-0237}
\runauthor{Iosif Pinelis}

%\affiliation{Michigan Technological University}

\address{Department of Mathematical Sciences\\
Michigan Technological University\\
Houghton, Michigan 49931, USA\\
E-mail: \printead[ipinelis@mtu.edu]{e1}}
\end{aug}

\begin{abstract} 
An exact Rosenthal-type inequality for the third absolute moments is given, as well as a number of related results. 
Such results are useful in applications to Berry--Esseen bounds. 
%One of the results of this paper is the inequality 
%$$\E(S-x)_+^3&\le\E(\si Z-x)_+^3+\sum\E(X_i)_+^3,$$ 
%where $x$ is any real number, $Z$ is a standard normal random variable (r.v.), $S:=\sum X_i$, and the $X_i$ are independent zero-mean r.v.'s, and and $\si:=\sqrt{\Var X}$.     
\end{abstract}

%\subjclass[2000]{60E15, 62G10, 62G15, 60G50, 62G35}
% 62G10    	Hypothesis testing
%  62G15    	Tolerance and confidence regions
%  60G50    	Sums of independent random variables; random walks
%   62G35    	Robustness
  
%
%\keywords{probability inequalities; Rade\-macher random variables; sums of independent random variables; Student's test; self-normalized sums}

\begin{keyword}[class=AMS]
%\kwd[Primary ]{49K30} %Optimal solutions belonging to restricted classes
\kwd{60E15}%   	Inequalities; stochastic orderings
%\kwd{60B11} %   	Probability theory on linear topological spaces [See also 28C20]
%\kwd[; secondary ]{26D15} %   	Inequalities for sums, series and integrals
%\kwd{46A55} %  	Convex sets in topological linear spaces; Choquet theory [See also 52A07]
%\kwd{46N10} %   	Applications in optimization, convex analysis, mathematical programming, economics
%\kwd{46N30} %   	Applications in probability theory and statistics
%\kwd{60B05} %   	Probability measures on topological spaces
%\kwd{90C05} %   	Linear programming
%\kwd{90C25} %   	Convex programming
%\kwd{90C26} %   	Nonconvex programming, global optimization
%\kwd{90C48} %   	Programming in abstract spaces
\end{keyword}

\begin{keyword}
\kwd{Rosenthal inequality}
\kwd{bounds on moments}
\kwd{sums of independent random variables}
\kwd{probability inequalities}
\end{keyword}

%AMS 2000 subject classications: Primary 60E15; secondary 46B09.
%Keywords and phrases: probability inequalities, Rosenthal inequality,
%sums of independent random variables, martingales, concentration of mea-
%sure, separately Lipschitz functions, product spaces.

% 46B09   	Probabilistic methods in Banach space theory
%  60B11   	Probability theory on linear topological spaces [See also 28C20]
%   60E15   	Inequalities; stochastic orderings

\end{frontmatter}

\settocdepth{chapter}

%\tableofcontents 
%%%%%%%%%%%%%%%%%{\small\tableofcontents} 

\settocdepth{subsubsection}

\theoremstyle{plain} 
%\newtheorem{theorem}{Theorem}[section]
%\newtheorem{corollary}[theorem]{Corollary}
%\newtheorem*{main}{Main~Theorem}
%\newtheorem{lemma}{Lemma}[subsection]
%\newtheorem{proposition}[theorem]{Proposition}
%\newtheorem{conjecture}{Conjecture}
%\theoremstyle{definition} 
%\newtheorem{definition}[theorem]{Definition}
%\theoremstyle{definition} 
%\newtheorem{ex}{Example}
%\theoremstyle{remark} 
%\newtheorem{exer}{Exercise}
%\theoremstyle{remark} 
%\newtheorem{remark}[theorem]{Remark}b
%\newtheorem*{remark*}{Remark}
%\numberwithin{equation}{section}

%\eject

\section{Introduction, summary, and discussion}\label{intro} 
Let $X_1,\dots,X_n$ be independent random variables (r.v.'s), with the sum $S:=X_1+\dots+X_n$, such that 
for some real positive constant %$\si$ and 
$\be$ and all $i$ one has 
\begin{equation}\label{eq:BH conds}
	%X_i\le y,\quad 
	\E X_i\le0, \quad %\text{and}\quad 
	%0<
	\sum\E X_i^2\le1,%\si^2,
	\quad\text{and}\quad \sum\E(X_i)_+^3\le\be; %\tag{$\BH$ conds}
\end{equation}
as usual, we let $x_+:=0\vee x$ and $x_+^p:=(x_+)^p$ for all real $x$ and all real $p>0$. 

Consider the following class of functions:
\begin{align} \notag%\label{eq:F3}
\F^3&:=\{f\in\C^2\colon \text{$f$ and $f''$ are nondecreasing and convex}\} \notag\\
&=\{f\in\C^2\colon \text{$f$, $f'$, $f''$, $f'''$ are nondecreasing}\}, \label{eq:F3}
\end{align} 
where $\C^2$ denotes the class of all twice continuously differentiable functions 
$f\colon\R\to\R$ and $f'''$ denotes the right derivative of the convex function $f''$. 
For example, functions $x\mapsto a+b\,x+c\,(x-t)_+^\al$ and 
$x\mapsto a+b\,x+c\,e^{\la x}$ belong to $\F^3$ for all $a\in\R$, $b\ge0$, $c\ge0$, $t\in\R$, $\al\ge3$, and $\la\ge0$. 

\begin{remark*}% \label{rem:Jensen's}
If a r.v.\ $X$ has a finite expectation and a function $f\colon\R\to\R$ is in $\F^3$ or, more generally, is any convex function, then, by Jensen's inequality, $\E f(X)$ always exists in $(-\infty,\infty]$. 
\end{remark*}  

The main result of this note is 

\begin{theorem}\label{th:}
For any function $f\in\F^3$
\begin{equation}\label{eq:}
	\E f(S)\le\E f(Z)+\frac{f'''(\infty-)}{3!}\,\be, %\sum\E(X_i)_+^3,  
\end{equation}
where $Z$ is a standard normal r.v. %\ and $f'''$ is the left derivative of the convex function $f''$ 
%; as usual, we let $x_+:0\vee x$ and $x_+^p:=(x_+)^p$ for all real $x$ and all real $p>0$.  %
%(which is nondecreasing). 
Moreover, for each function $f\in\F^3$ the upper bound in \eqref{eq:} is exact, in the sense that it is equal to the supremum of $\E f(S)$ over all independent $X_i$'s satisfying conditions \eqref{eq:BH conds}. % equals the right-hand side of \eqref{eq:}. 
\end{theorem}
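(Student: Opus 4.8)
The natural route is to reduce the inequality in \eqref{eq:} to a one-dimensional problem for each summand and then iterate. First I would establish a \emph{single-variable comparison lemma}: for any r.v.\ $X$ with $\E X\le0$ and any $f\in\F^3$, there is a pointwise-dominating ``Taylor-type'' bound
\begin{equation*}
	\E f(x+X)\le f(x)+f'(x)\,\E X+\tfrac12 f''(x)\,\E X^2+\tfrac{1}{3!}\,f'''(\infty-)\,\E (X)_+^3,
\end{equation*}
valid simultaneously for all $x$, or at least after the relevant smoothing. The point of requiring $f,f',f'',f'''$ all nondecreasing is exactly that the remainder in the second-order Taylor expansion of $f$ at $x$ is controlled by $\tfrac{1}{3!}f'''(\infty-)\,(X)_+^3$ on the right and is nonpositive contribution from the negative part (since $f'''\ge0$ makes $f''$ convex, and $f''$ nondecreasing handles the tail at $+\infty$), while $\E X\le0$ paired with $f'$ nondecreasing lets one discard the first-order term after replacing $X$ by something centered. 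I would make this precise by writing $f(x+t)-f(x)-f'(x)t-\tfrac12 f''(x)t^2=\tfrac12\int_0^t (t-s)^2 f'''(x+s)\,\d s$ (in the sense of the right derivative of the convex function $f''$), bounding $f'''(x+s)\le f'''(\infty-)$ for $s\ge0$ and $f'''(x+s)\ge0$ for $s<0$.

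Next I would run the \textbf{standard hybrid / telescoping argument}. Let $T=Z_1+\dots+Z_n$ where the $Z_i$ are independent standard-normal-type pieces with $\E Z_i^2=\E X_i^2$ (so $\sum\E Z_i^2\le1$ and one final adjustment makes $T\overset{D}{=}\sqrt{\sum\E X_i^2}\,Z$, then one compares with $Z$ using $f$ nondecreasing-convex and a scaling/monotonicity step, or simply augments with an extra independent normal summand to make the variances sum to exactly $1$). Replace $X_i$ by $Z_i$ one at a time; at the $i$-th step, conditionally on everything else, apply the single-variable lemma with $x=W_i:=X_1+\dots+X_{i-1}+Z_{i+1}+\dots+Z_n$ (a fixed number given the conditioning), noting that the second-order term matches because $\E X_i^2$ and $\E Z_i^2$ agree, the first-order term is handled by $\E X_i\le0$ and $\E Z_i=0$ together with $f'(\,\cdot+W_i)$ being nondecreasing, and the only surviving defect is $\tfrac{1}{3!}f'''(\infty-)\,\E(X_i)_+^3$. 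Summing the $n$ telescoped defects and using $\sum\E(X_i)_+^3\le\be$ yields \eqref{eq:}. The extra-normal-summand trick is cleaner than rescaling, because $f\in\F^3$ is preserved under $f\mapsto\E f(\,\cdot+\sigma Z')$ for an independent normal $Z'$.

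For the \textbf{exactness} claim I would exhibit an explicit near-extremal sequence. Take $n$ large, and let each $X_i$ be a small two-point (or three-point) r.v.: with probability $\be/(n s_n)$ it equals a large positive value $s_n\to\infty$ chosen so that $\sum\E(X_i)_+^3\to\be$ and $\sum\E X_i^2\to0$ on the positive part, and otherwise it is a centered small-variance piece summing in variance to (nearly) $1$; arrange $\E X_i=0$. As $n\to\infty$ the ``bulk'' part converges in distribution to $Z$ and is asymptotically independent of the rare big jumps, whose contribution to $\E f(S)$ is asymptotically $\tfrac{1}{3!}f'''(\infty-)\,\be$ precisely because $f(x)/x^3\to f'''(\infty-)/3!$ as $x\to+\infty$ (a consequence of $f'''$ being nondecreasing and bounded-above in the relevant regime — one checks the two cases $f'''(\infty-)<\infty$ and $f'''(\infty-)=\infty$ separately, the latter giving both sides $+\infty$). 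A short Fatou/dominated-convergence argument then shows $\liminf_n\E f(S_n)\ge \E f(Z)+\tfrac{1}{3!}f'''(\infty-)\,\be$, matching the upper bound.

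\textbf{Main obstacle.} The delicate point is the single-variable comparison lemma when $f'''(\infty-)$ is finite but $f''$ grows, versus the treatment of the first-order term: one must be careful that replacing $\E X_i$ (which is $\le0$, not necessarily $=0$) does not cost anything, i.e.\ that the map $x\mapsto f(x+t)$ enjoys enough monotonicity in $t$ at the level of expectations after conditioning — this is where $f'$ nondecreasing is used, but it interacts subtly with the convexity bookkeeping, and handling the endpoint behavior $f'''(\infty-)=\infty$ (so that \eqref{eq:} reads $\le\infty$ and is trivial, yet exactness must still be argued) requires separating cases. The telescoping itself is routine once the lemma is in hand.
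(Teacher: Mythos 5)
Your proposal is correct in substance but follows a genuinely different route from the paper. The paper does not telescope at all: it truncates ($X_{i,y}:=X_i\wedge y$), invokes the exact Hoeffding-type bound of \cite{pin-hoeff} whose extremal distribution is $\sqrt{1-\be/y}\,Z+y\tPi_{\be/y^3}$ (normal plus centered Poisson), and then lets $y\to\infty$, so that both the inequality and its exactness are inherited in the limit from the cited result. Your Lindeberg-type argument is self-contained and elementary by comparison, and your near-extremal configuration (Gaussian bulk plus a rare jump of size $s_n$ with probability $\sim\be/s_n^3$) is exactly the $y\to\infty$ degeneration of the paper's normal-plus-Poisson extremal law, with the $j=1$ Poisson atom supplying the $\frac{f'''(\infty-)}{3!}\,\be$ term via $f(x)/x^3\to f'''(\infty-)/3!$ --- so your exactness argument recovers by hand what the paper gets for free. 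The one step you should make explicit is the Gaussian half of the swap: after bounding $\E f(x+X_i)\le f(x)+\tfrac12 f''(x)\E X_i^2+\tfrac{1}{3!}f'''(\infty-)\E(X_i)_+^3$ (dropping $f'(x)\E X_i\le 0$ since $f'\ge0$, not because $f'$ is nondecreasing), you still need the matching lower bound $\E f(x+Z_i)\ge f(x)+\tfrac12 f''(x)\E Z_i^2$, which is \emph{not} automatic from ``the second-order terms agree'': it requires the convexity of $f''$ (equivalently, $f'''$ nondecreasing), e.g.\ via the symmetrization identity $R(t)+R(-t)=\tfrac12\int_0^t(t-s)^2\bigl(f'''(x+s)-f'''(x-s)\bigr)\,\d s\ge0$ for the Taylor remainder $R$, or via the heat-equation identity $\frac{\d}{\d(\si^2)}\E f(x+\si Z)=\tfrac12\E f''(x+\si Z)\ge\tfrac12 f''(x)$ and Jensen. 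With that supplied, and the routine integrability checks you indicate for the case $f'''(\infty-)<\infty$, the argument goes through.
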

Of course, in the case when $f'''(\infty-)=\infty$, the inequality \eqref{eq:} is trivial. 
Theorem~\ref{th:} is based on the main result of \cite{pin-hoeff%,pin-hoeff-AIHP
}. 

%\begin{remark}\label{rem:}
%%
%\end{remark}
It follows immediately from Theorem~\ref{th:} that for all real $x$ 
\begin{align} 
	\E(S-x)_+^3&\le\E(Z-x)_+^3+\be. 	\label{eq:_+^3}
\intertext{If it is additionally assumed that $\E X_i=0$ for all $i$, then \eqref{eq:_+^3} in turn yields} 
	\E|S-x|^3&\le\E|Z-x|^3+\sum\E|X_i|^3; \label{eq:| |^3} 
\end{align}
moreover, one can similarly show that the upper bound in \eqref{eq:| |^3} is exact, for each real $x$;  
%\begin{equation}\label{eq:_+^3} 
%	\E(S-x)_+^3\le\E (Z-x)_+^3+\be; 
%\end{equation}
%if, moreover, it is assumed that $\E X_i\le0$ for all $i$, then 
%\begin{equation}\label{eq:| |^3} 
%	\E|S-x|^3\le\E|Z-x|^3+\sum\E|X_i|^3. 
%\end{equation}
the special case $x=0$ of \eqref{eq:| |^3} is also a special case of Rosenthal's inequality \cite{rosenthal}: 
\begin{equation}\label{eq:rosenthal}
	\E|S|^p\le c_p\,\big(1+\sum\E|X_i|^p\big), 
\end{equation}
for all $p\ge2$, where $c_p$ is a positive constant depending only on $p$ (inequality \eqref{eq:rosenthal} too needs the assumption that the $X_i$'s be zero-mean).  
In the case when $x=0$ and the $X_i$'s are symmetric, inequality \eqref{eq:| |^3} was obtained by Ibragimov and Sharakhmetov \cite{ibr-shar97}, who at that considered arbitrary real $p>2$. 
Besides, inequality \eqref{eq:| |^3} follows from Tyurin's result \cite[Theorem~2]{tyurinSPL}, which also implies \eqref{eq:_+^3} but with $\sum\E|X_i|^3$ in place of $\be$. 
More on Rosenthal-type inequalities and related results can be found, among other papers, in \cite{%rosenthal,
burk,MR0443034,pin-utev84,utev-extr,sibam,latala-moments,gine-lat-zinn,
%novak00,
ibr-sankhya,bouch-etal,pin12-2smooth%,novak05
}. 

Theorem~\ref{th:} admits 

\begin{corollary}\label{cor:}
For any $p\in(0,3)$ and any real $a>0$
\begin{align*}
	\E S_+^p&\le\frac{p^p(3-p)^{3-p}}{3^3}\,\frac{\E(Z+a)_+^3+\be}{a^{3-p}}; 
\end{align*}	 
%	\intertext{
in particular, taking here $(p,a)=(1,\frac{1746}{1000})$ and $(p,a)=(2,\frac{639}{1000})$, one obtains, respectively, the inequalities 
\begin{equation}\label{eq:cor}
	\E S_+\le0.514+0.0486\be\quad\text{and}\quad \E S_+^2\le0.555+0.232\be. 
\end{equation} 
\end{corollary}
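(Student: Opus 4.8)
The plan is to deduce Corollary~\ref{cor:} from Theorem~\ref{th:} by choosing, for each fixed $p\in(0,3)$ and $a>0$, a function $f\in\F^3$ that dominates $x\mapsto x_+^p$ pointwise and whose third derivative at $+\infty$ is as small as possible. The natural candidate is a cubic tangent/scaling construction: we want the smallest constant $c>0$ such that $x_+^p\le c\,(x+a)_+^3$ for all real $x$. On $x\le -a$ both sides vanish; on $x\ge -a$ we need $c\ge \sup_{x\ge 0} x^p/(x+a)^3$ (the supremum over $x\in[-a,0]$ being $0$), and the unconstrained maximizer of $x^p/(x+a)^3$ is found by setting the logarithmic derivative $p/x-3/(x+a)$ to zero, giving $x=\frac{p a}{3-p}$. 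Substituting back yields $c=\dfrac{p^p(3-p)^{3-p}}{3^3\,a^{3-p}}$ after simplification. Then $f(x):=c\,(x+a)_+^3$ lies in $\F^3$ (it is of the stated form with exponent $\al=3\ge3$), so Theorem~\ref{th:} gives $\E f(S)\le \E f(Z)+\frac{f'''(\infty-)}{3!}\be$, and since $f'''(\infty-)=6c$ and $\E S_+^p\le \E f(S)$, we get $\E S_+^p\le c\,\E(Z+a)_+^3+c\,\be = c\,(\E(Z+a)_+^3+\be)$, which is exactly the claimed bound.

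For the two numerical instances one simply plugs in $(p,a)=(1,\tfrac{1746}{1000})$ and $(p,a)=(2,\tfrac{639}{1000})$, evaluates $c=\frac{p^p(3-p)^{3-p}}{27\,a^{3-p}}$ and $\E(Z+a)_+^3$, and checks the stated decimal inequalities hold; here $\E(Z+a)_+^3$ is a closed-form expression in terms of the standard normal density $\vp$ and cdf $\Phi$ (one has $\E(Z+a)_+^3=(a^3+3a)\Phi(a)+(a^2+2)\vp(a)$ by integrating by parts repeatedly, or more simply $\E(Z+a)_+^3=\E(Z+a)^3\mathbf{1}\{Z>-a\}$ expanded via the moments of $Z$ truncated to $(-a,\infty)$). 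The specific values of $a$ are presumably the (approximate) minimizers of the resulting bound $c\,(\E(Z+a)_+^3+\be)$ over $a$ for the relevant range of $\be$, or are simply convenient rational choices that make the coefficients come out cleanly; either way, verifying \eqref{eq:cor} is a finite computation.

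The only real subtlety is confirming that the constant $c$ in the pointwise domination is sharp (so that the corollary is as strong as this method allows) and, more importantly here, that $f(x)=c(x+a)_+^3$ genuinely belongs to $\F^3$: one must check that $f$ is $C^2$ (it is, since the cubic vanishes to second order at $x=-a$) and that $f,f',f'',f'''$ are all nondecreasing on $\R$, which is immediate because $(x+a)_+^3$ is convex with convex, nondecreasing second derivative $6(x+a)_+$. I do not anticipate a genuine obstacle; the computation of the optimal $x^*=\frac{pa}{3-p}$ and the algebraic simplification of $c$ is the most error-prone step but is entirely routine. One should also note the degenerate boundary behavior: as $p\uparrow 3$ the bound reduces to \eqref{eq:_+^3} (with $x=-a$), consistent with Theorem~\ref{th:}, which is a useful sanity check on the constant.
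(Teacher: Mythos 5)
Your proposal is correct and is essentially the paper's own argument: the paper deduces the corollary from inequality \eqref{eq:_+^3} (i.e., Theorem~\ref{th:} applied to $x\mapsto(x+a)_+^3$) together with the identity $\sup_{u\ge0}u^p/(u+a)^3=p^p(3-p)^{3-p}/(3^3a^{3-p})$, which is exactly your pointwise-domination constant $c$. Your direct application of Theorem~\ref{th:} to $f(x)=c\,(x+a)_+^3$, the verification that $f\in\F^3$ with $f'''(\infty-)=6c$, and the closed form for $\E(Z+a)_+^3$ are all sound, so the proof goes through as written.
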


One may compare the latter two bounds with the ``naive'' ones, obtained using the inequalities 
$(\E S_+)^2\le\E S_+^2\le\E S^2\le1$; here one may note that $\be$ will rather typically be small. 
One can similarly bound $\E(S-x)_+^p$ for any real $x$ and any $p\in(0,3)$. 
The first inequality in \eqref{eq:cor} can in fact be improved: 
\begin{equation}\label{eq:p=1}
	\E S_+\le\tfrac12, 
\end{equation}
which follows because $4u_+\le u^2+2u+1$ for all real $u$; the bound $\frac12$ on $\E S_+$ in \eqref{eq:p=1} is obviously attained when $\P(S=\pm1)=\frac12$. % is a Rademacher r.v. 

The case $p=3$ of Rosenthal-type inequalities, including the results stated above, is especially important in applications to Berry--Esseen bounds; see e.g.\ \cite{nonlinear}, Remark~3.4 in \cite{more-nonunif}, and the ``quick proofs'' of Nagaev's nonuniform Berry--Esseen bound in \cite{nonunif,more-nonunif}.  

\section{Proofs}\label{proofs}

\begin{proof}[Proof of Theorem~\ref{th:}]
Take indeed any $f\in\F^3$. 
Next, take any real $y>\be$ and introduce the r.v.'s 
\begin{equation*}
	X_{i,y}:=X_i\wedge y\quad\text{and}\quad S_y:=\sum_i X_{i,y}. 
\end{equation*}
Then the conditions \eqref{eq:BH conds} hold for the $X_{i,y}$'s in place of $X_i$. 
Also, %trivially, 
$X_{i,y}\le y$ for all $i$. 
So, by the main result of \cite{pin-hoeff}, 
\begin{align}
	\E f(S_y)&\le\E f\big(\sqrt{1-\be/y}\,Z+y\tPi_{\be/y^3}\big) \label{eq:Pin} \\ 
	&=\sum_{j=0}^\infty\E f\big(\sqrt{1-\be/y}\,Z+yj-\be/y^2\big)\frac{(\be/y^3)^j}{j!}e^{-\be/y^3}, \label{eq:sum} %\quad\text{for all }f\in\F^3,
\end{align}
where $\tPi_{\th}:=\Pi_{\th}-\E\Pi_{\th}=\Pi_{\th}-\th$ and $\Pi_{\th}$ is any r.v.\ which is independent of $Z$ and has the Poisson distribution with parameter $\th$, for any real $\th>0$. 
Moreover, by \cite[Proposition 2.3]{pin-hoeff}, for any given triple $(f,\be,y)\in\F^3\times(0,\infty)\times(0,\infty)$ with $y>\be$ the bound in \eqref{eq:Pin} is exact, in the sense that it is equal to the supremum of $\E f(S_y)$ 
over all independent $X_i$'s satisfying conditions \eqref{eq:BH conds}. 

Now let 
$$y\to\infty.$$ 
Then, by the monotone convergence theorem, 
\begin{equation}\label{eq:lim}
	\E f(S_y)\to\E f(S). 
\end{equation}
As was mentioned earlier, in the case when $f'''(\infty-)=\infty$ the inequality \eqref{eq:} is trivial. Consider now the case when $f'''(\infty-)<\infty$. Then, by a l'Hospital-type rule,  
$f(x)/x^3\to f'''(\infty-)/3!$ as $x\to\infty$, which also leads to 
%for each $q\in\{0,1,2,3\}$ 
%there is a finite positive real constant 
%$C$ such that 
$|f(x)|=O(1+|x|^3)$ over all real $x$ \big(for negative real $x$, one even has $|f(x)|=O(1+|x|)$, since $f$ is nondecreasing and convex; cf.\ e.g.\ \cite[Lemma~7]{asymm}\big). 
%$C_q$ such that $|f^{(q)}(x)|\le C_q\,(1+|x|^{3-q})$ for all real $x$, where $f^{(q)}$ denotes, as usual, the $q$th derivative of $f$. 
Therefore, by the dominated convergence theorem, 
\begin{equation*}
\begin{alignedat}{2}
	\E f\big(\sqrt{1-\be/y}\,Z+yj-\be/y^2\big)&\longrightarrow\E f(Z) && \text{\ \ if } j=0 \\
	\frac{\E f\big(\sqrt{1-\be/y}\,Z+yj-\be/y^2\big)}{y^3}&\longrightarrow f'''(\infty-)\,\frac{j^3}{3!}  && \text{\ \ if } j>0, 
\end{alignedat}	
\end{equation*} 
%\begin{equation}
%	\E f\big(\sqrt{1-\be/y}\,Z+yj-\be/y^2\big)
%	\left\{
%	\begin{alignedat}{2}
%	&\longrightarrow\E f(Z) && \text{\ \ if } j=0, \\ 
%	&\sim f'''(\infty-)(yj)^3/3!  && \text{\ \ if } j>0, 
%	\end{alignedat}
%	\right.
%\end{equation} 
and so, again by the dominated convergence theorem (say), the sum in \eqref{eq:sum} converges to $\E f(Z)+\frac{f'''(\infty-)}{3!}\,\be$. 
In view of \eqref{eq:Pin}--\eqref{eq:lim}, this proves the inequality \eqref{eq:}; the exactness of the bound in \eqref{eq:} follows from that of the bound in \eqref{eq:Pin}--\eqref{eq:sum}.  
\end{proof}

\begin{proof}[Proof of Corollary~\ref{cor:}]
This follows from \eqref{eq:_+^3}, since 
$\sup_{u\ge0}\frac{u^p}{(u+a)^3}=\frac{p^p(3-p)^{3-p}}{3^3a^{3-p}}%\,\frac1{a^{3-p}}
$ for any $p\in(0,3)$ and any real $a>0$. 
\end{proof}

\bibliographystyle{abbrv}
%\bibliographystyle{ims}
%\bibliography{are.citations}
%\bibliography{citat}

%\bibliography{citations}

\bibliography{C:/Users/Iosif/Dropbox/mtu/bib_files/citations12.13.12}

\def\cprime{$'$} \def\polhk#1{\setbox0=\hbox{#1}{\ooalign{\hidewidth
  \lower1.5ex\hbox{`}\hidewidth\crcr\unhbox0}}}
  \def\polhk#1{\setbox0=\hbox{#1}{\ooalign{\hidewidth
  \lower1.5ex\hbox{`}\hidewidth\crcr\unhbox0}}}
  \def\polhk#1{\setbox0=\hbox{#1}{\ooalign{\hidewidth
  \lower1.5ex\hbox{`}\hidewidth\crcr\unhbox0}}} \def\cprime{$'$}
  \def\polhk#1{\setbox0=\hbox{#1}{\ooalign{\hidewidth
  \lower1.5ex\hbox{`}\hidewidth\crcr\unhbox0}}}
  \def\polhk#1{\setbox0=\hbox{#1}{\ooalign{\hidewidth
  \lower1.5ex\hbox{`}\hidewidth\crcr\unhbox0}}}
\begin{thebibliography}{10}

\bibitem{bouch-etal}
S.~Boucheron, O.~Bousquet, G.~Lugosi, and P.~Massart.
\newblock Moment inequalities for functions of independent random variables.
\newblock {\em Ann. Probab.}, 33(2):514--560, 2005.

\bibitem{burk}
D.~L. Burkholder.
\newblock Distribution function inequalities for martingales.
\newblock {\em Ann. Probability}, 1:19--42, 1973.

\bibitem{gine-lat-zinn}
E.~Gin{\'e}, R.~Lata{\l}a, and J.~Zinn.
\newblock Exponential and moment inequalities for {$U$}-statistics.
\newblock In {\em High dimensional probability, {II} ({S}eattle, {WA}, 1999)},
  volume~47 of {\em Progr. Probab.}, pages 13--38. Birkh\"auser Boston, Boston,
  MA, 2000.

\bibitem{ibr-shar97}
R.~Ibragimov and S.~Sharakhmetov.
\newblock On an exact constant for the {R}osenthal inequality.
\newblock {\em Teor. Veroyatnost. i Primenen.}, 42(2):341--350, 1997.

\bibitem{ibr-sankhya}
R.~Ibragimov and S.~Sharakhmetov.
\newblock On extremal problems and best constants in moment inequalities.
\newblock {\em Sankhy\=a Ser. A}, 64(1):42--56, 2002.

\bibitem{latala-moments}
R.~Lata{\l}a.
\newblock Estimation of moments of sums of independent real random variables.
\newblock {\em Ann. Probab.}, 25(3):1502--1513, 1997.

\bibitem{MR0443034}
S.~V. Nagaev and I.~F. Pinelis.
\newblock Some inequalities for the distributions of sums of independent random
  variables.
\newblock {\em Teor. Verojatnost. i Primenen.}, 22(2):254--263, 1977.
\newblock MR0443034.

\bibitem{sibam}
I.~Pinelis.
\newblock Optimum bounds on moments of sums of independent random vectors.
\newblock {\em Siberian Adv. Math.}, 5(3):141--150, 1995.
\newblock %Siberian Advances in Mathematics.

\bibitem{asymm}
I.~Pinelis.
\newblock Exact inequalities for sums of asymmetric random variables, with
  applications.
\newblock {\em Probab. Theory Related Fields}, 139(3-4):605--635, 2007.

\bibitem{pin-hoeff}
I.~Pinelis.
\newblock On the {B}ennett-{H}oeffding inequality, \emph{a shorter version to
  appear in} \emph{{A}nnales de l'{I}nstitut {H}enri {P}oincar\'e}.
\newblock \url{http://arxiv.org/abs/0902.4058}, 2012.

\bibitem{pin12-2smooth}
I.~Pinelis.
\newblock Rosenthal-type inequalities for martingales in 2-smooth {B}anach
  spaces.
\newblock \url{http://arxiv.org/abs/1212.1912}, 2012.

\bibitem{more-nonunif}
I.~Pinelis.
\newblock More on the nonuniform {B}erry--{E}sseen bound.
\newblock \url{http://arxiv.org/abs/1302.0516}, 2013.

\bibitem{nonunif}
I.~Pinelis.
\newblock On the nonuniform {B}erry--{E}sseen bound.
\newblock \url{http://arxiv.org/abs/1301.2828}, 2013.

\bibitem{nonlinear}
I.~Pinelis and R.~Molzon.
\newblock Berry-{E}ss{\'e}{e}n bounds for general nonlinear statistics, with
  applications to {P}earson's and non-central {S}tudent's and {H}otelling's
  (preprint), ar{X}iv:0906.0177 [math.{ST}].

\bibitem{pin-utev84}
I.~F. Pinelis and S.~A. Utev.
\newblock Estimates of moments of sums of independent random variables.
\newblock {\em Theory Probab. Appl.}, 29(3):574--577, 1984.

\bibitem{rosenthal}
H.~P. Rosenthal.
\newblock On the subspaces of {$L^{p}$} {$(p>2)$} spanned by sequences of
  independent random variables.
\newblock {\em Israel J. Math.}, 8:273--303, 1970.

\bibitem{tyurinSPL}
I.~S. Tyurin.
\newblock Some optimal bounds in the central limit theorem using zero biasing.
\newblock {\em Statist. Probab. Lett.}, 82(3):514--518, 2012.

\bibitem{utev-extr}
S.~A. Utev.
\newblock Extremal problems in moment inequalities.
\newblock In {\em Limit theorems of probability theory}, volume~5 of {\em Trudy
  Inst. Mat.}, pages 56--75, 175. ``Nauka'' Sibirsk. Otdel., Novosibirsk, 1985.

\end{thebibliography}
%\bibliography{C:/Users/iosif-home-2011/Dropbox/mtu/bib_files/citations}
%\bibliography{C:/Users/Iosif/Documents/mtu_home01-30-10/bib_files/citations}
%\bibliography{C:/Users/Iosif/Documents/mtu_home12-22-08/bib_files/citations}

\end{document}